\begin{document}

\title[The cluster value problem in continuous function]{The cluster value problem in spaces of continuous functions}

%    Only \author and \address are required; other information is
%    optional.  Remove any unused author tags.

%    author one information
% \author[short version for running head]{name for top of paper}
\author{W.~B.~Johnson}
\address{Department of Mathematics, Texas A\&M University, College Station, TX 77843}
\curraddr{}
\email{johnson@math.tamu.edu}
\thanks{The authors were supported in part by NSF DMS 10-01321.}

%    author two information
\author{S.~Ortega Castillo}
\address{Department of Mathematics, Texas A\&M University, College Station, TX 77843}
\curraddr{CIMAT A.~C., Guanajuato, Guanajuato, M\'exico 36240}
\email{sofia.ortega@cimat.mx}
\thanks{}

\subjclass{Primary 32-XX; Secondary 46-XX}

\newtheorem{thm}{Theorem}[section]

\newtheorem{df}[thm]{Definition}
\newtheorem{ex}[thm]{Example}
\newtheorem{lm}[thm]{Lemma}
\newtheorem{pr}[thm]{Proposition}
\newtheorem{co}[thm]{Corollary}
\newtheorem{re}[thm]{Remark}
\newtheorem{note}[thm]{Note}
\newtheorem{claim}[thm]{Claim}
\newtheorem{problem}[thm]{Problem}

\numberwithin{equation}{section} 

%\def\R{{\mathbb R}}

%\def\E{\mathbb{E}}
%\def\calF{{\cal F}}
%\def\N{\mathbb{N}}
%\def\calN{{\cal N}}
%\def\calH{{\cal H}}
%\def\n{\nu}
%\def\a{\alpha}
%\def\d{\delta}
%\def\t{\theta}
%\def\e{\varepsilon}
%\def\t{\theta}
%\def\g{\gamma}
%\def\G{\Gamma}
%\def\b{\beta}
%\def\pf{ \noindent {\bf Proof: \  }}

%\newcommand{\qed}{\hfill\vrule height6pt
%width6pt depth0pt}

%\def\endpf{\qed \medskip} \def\colon{{:}\;}
%\setcounter{footnote}{0}

%\renewcommand{\qed}{\hfill\vrule height6pt  width6pt depth0pt}

%opening
%\title{\textbf{The cluster value problem in spaces of continuous functions.\footnote{Primary subject classification: %Several complex variables and analytic spaces.}\footnote{Secondary subject classification: Functional analysis.}}}
%\date{October 31, 2014}
%\author{W.~B.~Johnson\footnote{Supported in part by NSF DMS 10-01321 } \text{ }and S.~Ortega %Castillo\footnotemark[\value{footnote}]}

\date{}

\begin{abstract}
We study the cluster value problem for certain Banach algebras of holomorphic functions defined on the unit ball of a complex Banach space $X$. The main results are for spaces of the form $X=C(K).$
\end{abstract}

\maketitle

\section{Preliminaries.}

A cluster value problem for a complex Banach space $X$ is a weak version of the corona problem for the open unit ball $B$ of $X$, which is a long-standing open problem in complex analysis when $X$ has dimension at least $2$. Instead of studying when $B$ is dense in the spectrum of a uniform algebra $H$ of bounded analytic functions on $B$ in the weak topology induced by $H$(corona problem), the cluster value problem investigates the following situation:

\smallskip

Let $\bar{B}^{**}$ be the closed unit ball of the bidual $X^{**}$, and let $M_H$ be the spectrum (i.e. maximal ideal space) of a uniform algebra $H$ of norm continuous functions on $B$ with $H \supset X^*.$ Then $\pi: M_{H} \to \bar{B}^{**},$ given by $\pi(\tau)=\tau|_{X^*}$ for $\tau \in M_{H},$ is surjective (as a consequence of the results in Chapter 2 and 5 of \cite{G}). For each $x^{**} \in \bar{B}^{**},$ $M_{x^{**}}(B)=\pi^{-1}(x^{**})$ is called the fiber of $M_{H}$ over $x^{**}.$ Aron, Carando, Gamelin, Lasalle and Maestre observed in \cite{ACGLM} that for every $x^{**} \in \bar{B}^{**}$ we have the inclusion

\begin{equation}
Cl_{B}(f,x^{**})\subset \widehat{f}(M_{x^{**}}(B)), \; \forall f \in H,
\end{equation}

where $Cl_{B}(f,x^{**}),$ \textit{the cluster set of $f$ at $x^{**}$}, stands for the set of all limits of values of $f$ along nets in $B$ converging weak-star to $x^{**}$, while $\widehat{f}$ represents the Gelfand transform of $f.$  There they formulated the cluster value problem for $H:$ for which Banach spaces $X$ is there equality in (1) for all $x^{**} \in \bar{B}^{**}$? When there is equality in (1) for a certain $x^{**} \in \bar{B}^{**},$ we say $X$ satisfies \textit{the cluster value theorem for $H$ at $x^{**}.$} 

\smallskip

As was pointed out in \cite{ACGLM}, it is easy to check that the cluster value theorem for $H$ at all points in $\bar{B}^{**}$ is indeed weaker than the corona problem for $B$ and $H:$ Given $x^{**} \in \bar{B}^{**},$ if $\tau \in M_{x^{**}}(B)$ were the weak-star limit of the net $(x_{\alpha}) \subset B,$ then $\lim_{\alpha} f(x_{\alpha})=\widehat{f}(\tau)$ for all $f \in H,$ and in particular $\lim_{\alpha} x^*(x_{\alpha})=\widehat{x^*}(\tau)=x^*(x^{**})$ for all $x^* \in X^*,$ i.e. $x^{**}$ would be the weak-star limit of $(x_{\alpha}),$ and so $\widehat{f}(\tau) \in Cl_{B}(f,x^{**}).$

\smallskip

In an effort to research the corona problem, we investigate conditions that guarantee the simpler cluster value theorem for a Banach algebra of analytic functions defined on the unit ball of a complex Banach space $X.$ In particular, we generalize some of the results in \cite{ACGLM}. Our main results are for the spaces of the form $X=C(K),$ including a translation result of a cluster value problem, from any point in $B_{C(K)}$ to the origin.

%\smallskip

%We thank Richard Aron and Manuel Maestre for their communications.

\section{Cluster value problem in finite-codimensional subspaces.}

In \cite{ACGLM}, the authors obtain a cluster value theorem at the origin for Banach spaces with shrinking 1-unconditional bases for the algebra $H=A_u(B)$ of bounded analytic functions on $B$ that are also uniformly continuous. Slight modifications of their arguments in Section 3 of \cite{ACGLM} yield the following:

\begin{pr}\label{pr2.1}
Let $S$ be a finite rank operator on $X,$ so that $P=I-S$ has norm one. If $\phi \in M_0(B),$ then $\hat{f}(\phi)=\widehat{f\circ P}(\phi),$ for all $f \in A_u(B)$.  
\end{pr}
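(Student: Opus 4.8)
The plan is to write $f-f\circ P=\sum_{j=1}^{n}x_j^*\,h_j$ with each $h_j\in A_u(B)$, where $Sx=\sum_{j=1}^{n}x_j^*(x)e_j$ is a fixed representation of the finite rank operator $S$ (with $x_j^*\in X^*$, $e_j\in X$). Granting this, we are done: since $\phi$ is a character of $A_u(B)$ it is multiplicative, so $\widehat{f-f\circ P}(\phi)=\sum_{j}\phi(x_j^*)\,\widehat{h_j}(\phi)$, and $\phi(x_j^*)=0$ for every $j$ because $\phi\in M_0(B)$ means $\pi(\phi)=\phi|_{X^*}=0$. Hence $\widehat f(\phi)=\widehat{f\circ P}(\phi)$.

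First I would reduce to the case that $f$ is bounded and holomorphic on a ball $RB$ with $R>1$. For $0<t<1$ set $f_t(x)=f(tx)$; then $f_t$ is holomorphic on $t^{-1}B$ and bounded there by $\|f\|_\infty$, while $f_t\to f$ and $f_t\circ P\to f\circ P$ uniformly on $B$ (for the latter, $\|f_t\circ P-f\circ P\|_\infty\le\|f_t-f\|_\infty$ since $\|P\|\le1$, and $\|f_t-f\|_\infty\to0$ by uniform continuity of $f$). Since $|\widehat g(\phi)|\le\|g\|_\infty$ for $g\in A_u(B)$, it suffices to prove the identity for each $f_t$; so from now on assume $f$ is bounded by $M$ and holomorphic on $RB$, $R>1$.

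Fix $x\in B$. For $s\in[0,1]$ the point $(1-s)Px+sx$ is a convex combination of $Px$ and $x$, each of norm $\le\|x\|<1$, hence lies in $B\subset RB$; therefore $s\mapsto f\bigl((1-s)Px+sx\bigr)$ is $C^1$ on $[0,1]$ with derivative $Df\bigl((1-s)Px+sx\bigr)[Sx]=\sum_{j}x_j^*(x)\,Df\bigl((1-s)Px+sx\bigr)[e_j]$, so by the fundamental theorem of calculus
\[
f(x)-f(Px)=\sum_{j=1}^{n}x_j^*(x)\,h_j(x),\qquad
h_j(x):=\int_{0}^{1}Df\bigl((1-s)Px+sx\bigr)[e_j]\,ds .
\]
It remains to see that $h_j\in A_u(B)$. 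The map $x\mapsto(1-s)Px+sx=\bigl((1-s)P+sI\bigr)x$ is linear of norm $\le1$, hence carries $B$ into $B$, and $Df$ is holomorphic, so each integrand is holomorphic in $x$ and (being uniformly bounded in $s$, see below) so is $h_j$. Because $f$ is bounded by $M$ on $RB$ with $R>1$, the Cauchy estimates bound $Df$ by $M/(R-1)$ and $D^2f$ by a constant multiple of $M/(R-1)^2$ on all of $B$; together with $\|((1-s)P+sI)(x-x')\|\le\|x-x'\|$ this shows $h_j$ is bounded and Lipschitz on $B$, so $h_j\in A_u(B)$. Since $x_j^*\in X^*\subset A_u(B)$, the computation of the first paragraph now applies.

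The main obstacle is the control of the multipliers $h_j$: $Df$ blows up near $\partial B$, so without the dilation reduction the $h_j$ need not be bounded. It is also essential that we integrate $Df$ along the segment from $Px$ to $x$---which stays inside $B$ precisely because it is a convex combination of two points of norm $<1$---rather than over a circle of radius $>1$, for which $R>1$ would not suffice.
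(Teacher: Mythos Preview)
Your proof is correct. The paper does not supply its own proof of this proposition; it merely states that it follows by slight modifications of the arguments in Section~3 of \cite{ACGLM}, so there is no proof in the paper to compare to directly.

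For context, the standard \cite{ACGLM}-style argument would proceed via the Taylor expansion $f=\sum_m P_m$ into $m$-homogeneous polynomials (uniformly convergent on $\bar B$ since $f\in A_u(B)$) and, for each $P_m$, use the multilinear expansion
\[
P_m(x)-P_m(Px)=P_m(Px+Sx)-P_m(Px)=\sum_{k=1}^{m}\binom{m}{k}\widetilde{P_m}\bigl((Px)^{m-k},(Sx)^{k}\bigr),
\]
each summand containing at least one copy of $Sx=\sum_j x_j^*(x)e_j$; extracting one $x_j^*$ from each term yields $f-f\circ P=\sum_j x_j^*\,h_j$ with $h_j\in A_u(B)$ directly, without needing a dilation. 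Your fundamental-theorem-of-calculus formula achieves the same factorization in a single line, at the modest cost of the preliminary reduction $f\mapsto f_t$ so that the Cauchy estimates control $Df$ and $D^2f$ uniformly on $B$. Either way, the crux is identical: $f-f\circ P$ lies in the ideal of $A_u(B)$ generated by finitely many elements of $X^*$, and $\phi\in M_0(B)$ annihilates $X^*$.
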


\begin{pr}\label{pr2.2}
Suppose that for each finite dimensional subspace $E$ of $X^*$ and $\epsilon>0$ there exists a finite rank operator $S$ on $X$ so that $||(I-S^*)|_E||<\epsilon$ and $||I-S||=1.$ Then the cluster value theorem holds for $A_u(B)$ at $0.$
\end{pr}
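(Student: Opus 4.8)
The plan is to establish the inclusion $\widehat{f}(M_0(B)) \subset Cl_B(f,0)$ for every $f \in A_u(B)$; the reverse inclusion is (1). So fix $\phi \in M_0(B)$ and $f \in A_u(B)$, and recall that $\widehat{f}(\phi) \in Cl_B(f,0)$ precisely when, for every basic weak neighborhood $U = \{x \in X : |x_j^*(x)| < \eta,\ 1 \le j \le n\}$ of $0$ and every $\delta > 0$, there is a point $y \in B \cap U$ with $|f(y) - \widehat{f}(\phi)| < \delta$. I will produce such a $y$.

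Fix $U$, $\eta$, $x_1^*,\dots,x_n^*$ and $\delta$ as above, put $E = \operatorname{span}\{x_1^*,\dots,x_n^*\} \subset X^*$, and let $c = 1 + \max_{1\le j\le n}\|x_j^*\|$. By hypothesis choose a finite rank operator $S$ on $X$ with $\|I-S\|=1$ and $\|(I-S^*)|_E\| < \eta/c$, and set $P = I-S$. Since $\|P\|=1$ we have $P(B)\subset B$, and $f\circ P$ is bounded, analytic, and (being the composition of the Lipschitz map $P$ with the uniformly continuous $f$) uniformly continuous on $B$, so $f\circ P \in A_u(B)$. By Proposition \ref{pr2.1}, $\widehat{f}(\phi) = \widehat{f\circ P}(\phi)$.

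Next I will use the elementary fact that $\widehat{g}(\psi) \in \overline{g(B)}$ for every $g \in A_u(B)$ and every $\psi \in M_{A_u(B)}$: if $\lambda \notin \overline{g(B)}$ then $|g-\lambda| \ge \rho > 0$ on $B$, and $1/(g-\lambda)$ is bounded, analytic, and uniformly continuous on $B$ (the map $z\mapsto 1/(z-\lambda)$ is Lipschitz on $\{|z-\lambda|\ge \rho\}$), hence lies in $A_u(B)$; thus $g-\lambda$ is invertible in $A_u(B)$ and $\lambda \notin \widehat{g}(M_{A_u(B)})$. Applying this to $g = f\circ P$ yields a point $x_0 \in B$ with $|(f\circ P)(x_0) - \widehat{f\circ P}(\phi)| < \delta$. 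Let $y = Px_0$. Then $y \in B$, $|f(y) - \widehat{f}(\phi)| = |(f\circ P)(x_0) - \widehat{f\circ P}(\phi)| < \delta$, and for each $j$,
\[
|x_j^*(y)| = \bigl|\bigl((I-S^*)x_j^*\bigr)(x_0)\bigr| \le \|(I-S^*)|_E\|\,\|x_j^*\| < \frac{\eta}{c}\,\|x_j^*\| < \eta,
\]
so $y \in B\cap U$. This completes the verification and hence the proof.

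The argument is short once Proposition \ref{pr2.1} is available: the only real work is the bookkeeping showing that composing with $P$ simultaneously (i) preserves the Gelfand value at points of $M_0(B)$ and (ii) pushes the output into a prescribed weak neighborhood of $0$ on the finitely many functionals defining it. The step most worth stating carefully is the closure-under-reciprocals observation $\widehat{g}(M_{A_u(B)}) \subset \overline{g(B)}$, since it is what converts a value of the character into an actual value of $f\circ P$ on $B$; everything else is routine.
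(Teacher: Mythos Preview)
Your proof is correct and is essentially the paper's argument rewritten in direct form rather than by contrapositive: both rely on choosing $S$ so that $P=I-S$ maps $B$ into the given weak neighborhood, invoking Proposition~\ref{pr2.1} to identify $\widehat f(\phi)$ with $\widehat{f\circ P}(\phi)$, and then using that Gelfand values lie in the closure of the range (equivalently, that a function bounded away from zero is invertible in $A_u(B)$). The only cosmetic difference is that the paper assumes $0\notin Cl_B(f,0)$ and deduces invertibility of $f\circ P$, whereas you pick a point $x_0\in B$ with $(f\circ P)(x_0)$ close to the character value and set $y=Px_0$; the underlying idea is the same.
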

\begin{proof}
Suppose that $0 \notin Cl_B(f,0).$ We must show that $0 \notin \hat{f}(M_0).$ Since $0 \notin Cl_B(f,0),$ there exists $\delta>0$ and a weak neighborhood $U$ of $0$ in $X$ such that $|f|\geq \delta$ on $U\cap B.$ Without loss of generality we may assume $U=\cap_{i=1}^n \{ x \in X: \; \; |x_i^*|<\epsilon_0 \}$ for some $x_1^*, \cdots, x_n^* \in B_{X^*}$ and $\epsilon_0>0.$ Let $E=\text{span}\{ x_1^*,\cdots, x_n^*\}$ and let $S$ be as in the statement for $\epsilon=\epsilon_0.$ Then $|f \circ (I-S)|\geq \delta$ on $B,$ because for every $x \in B$ we have that $(I-S)x \in U,$ indeed:
$$|<x_i^*, (I-S)x>|=|<(I-S^*)x_i^*,x>|<\epsilon_0, \text{ for } i=1,\cdots, n.$$
Consequently $f\circ(I-S)$ is invertible in $A_u(B).$ Hence $\widehat{f\circ (I-S)}\neq 0$ on the fiber of the spectrum of $A_u(B)$ over $0.$ From the preceding lemma we then obtain $\hat{f}\neq 0$ on $M_0,$ that is $0 \notin \hat{f}(M_0).$
\end{proof} 

Since Proposition \ref{pr2.2} builds on Proposition \ref{pr2.1}, one naturally wonders if Proposition \ref{pr2.1} can be extended to the larger algebra $H^{\infty}(B)$ of all bounded analytic functions on $B$. The answer is no in general, as shown by the following example of Aron.

\begin{ex}\label{ex2.3}
There exists a finite rank operator $S$ on $\ell_2$ so that $P=I-S$ has norm one, and there exist $\phi \in M_0(B_{\ell_2})$ as well as $f \in H^{\infty}(B_{\ell_2})$ so that $\hat{f}(\phi)\neq \widehat{f\circ P}(\phi)$.
\end{ex}
\begin{proof}
Let $S:\ell_2 \to \ell_2$ be given by $S(x)=(x_1,0,0,\cdots).$

\medskip

Clearly $S$ is a finite rank operator and $P=I-S$ has norm one.

\medskip

Let $(r_j)$ and $(s_j)$ be sequences of positive real numbers, such that $(r_j)\downarrow 0$ and $(s_j)\uparrow 1$ in such a way that each $r_j^2+s_j^2<1$ and $r_j^2+s_j^2 \to 1^{-}.$ For each $j=1,2,3,\cdots,$ let $\delta_{r_j e_1+s_j e_j}$ be the usual point evaluation homomorphism from $H^{\infty}(B_{\ell_2}) \to \mathbb{C}.$ Let $\phi: H^{\infty}(B_{\ell_2}) \to \mathbb{C}$ be an accumulation point of $\{ \delta_{r_j e_1+s_j e_j}\}$ in the spectrum of $H^{\infty}(B_{\ell_2}).$ Let $f: B_{\ell_2} \to \mathbb{C}$ be the $H^{\infty}$ function given by
$$f(x)=\frac{x_1}{\sqrt{1-\sum_{j=2}^{\infty} x_j^2 }},$$
where the square root is taken with respect to the usual logarithm branch. Then $\phi(f)=1.$ However $\phi(f\circ P)=0$ since $f\circ P\equiv 0.$

\end{proof}

When a Banach space has a shrinking reverse monotone finite dimensional decomposition (FDD), that is, a shrinking FDD so that the natural projections are at distance one from the identity operator, we have that the condition in Proposition \ref{pr2.2} holds, and therefore we obtain a cluster value theorem:

\begin{co}\label{co2.4}
 If $X$ is a Banach space with a shrinking reverse monotone FDD, then the cluster value theorem holds for $A_u(B)$ at 0.
\end{co}

The operators $P$ considered in Propositions \ref{pr2.1} and \ref{pr2.2} have finite-codimensional rank, which suggests that the cluster value problem at the origin of a Banach space can be studied by considering the same problem in its finite-codimensional subspaces. We established the following relationship with the help of Aron and Maestre:

\begin{pr}\label{pr2.5}
If $Y$ is a closed finite-codimensional subspace of $X$ and $f \in A_u(B),$ then $Cl_{B}(f,0)=Cl_{B_Y}(f|_Y,0),$ where $B_Y$ is the unit ball of $Y.$
\end{pr}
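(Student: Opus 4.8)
The plan is to prove the two inclusions separately, with all the content in the inclusion $Cl_{B}(f,0)\subseteq Cl_{B_Y}(f|_Y,0)$; the other inclusion is essentially formal.

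For $Cl_{B_Y}(f|_Y,0)\subseteq Cl_{B}(f,0)$, note that a net $(y_\alpha)\subset B_Y$ converging weakly to $0$ in $Y$ also converges weakly to $0$ in $X$ (by Hahn--Banach the weak topology of $Y$ is the restriction of the weak topology of $X$), it lies in $B$, and $f$ agrees with $f|_Y$ along it; hence every limit of $f|_Y$ along such a net is a limit of $f$ along a weakly null net in $B$.

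For the reverse inclusion, fix $\lambda\in Cl_{B}(f,0)$ together with a net $(x_\alpha)\subset B$ with $x_\alpha\to 0$ weakly and $f(x_\alpha)\to\lambda$. Since $Y$ is closed and of finite codimension, write $X=Y\oplus F$ with $F$ finite dimensional, and let $Q\colon X\to F$ be the associated bounded projection along $Y$. Set $z_\alpha=Qx_\alpha$ and $y_\alpha=x_\alpha-z_\alpha\in Y$. Then $z_\alpha\to 0$ weakly, and because $F$ is finite dimensional this forces $\|z_\alpha\|\to 0$; consequently $\|y_\alpha\|\le\|x_\alpha\|+\|z_\alpha\|<1+\|z_\alpha\|$ and $y_\alpha\to 0$ weakly. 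A routine rescaling, e.g. $y_\alpha':=(1+2\|z_\alpha\|)^{-1}y_\alpha$ (with $y_\alpha':=y_\alpha$ when $z_\alpha=0$), produces points of $B_Y$ with $\|y_\alpha'-y_\alpha\|\to 0$, hence $\|y_\alpha'-x_\alpha\|\to 0$; it follows that $y_\alpha'\to 0$ weakly in $X$ and therefore weakly in $Y$. Finally, since $f\in A_u(B)$ is uniformly continuous on $B$ and $y_\alpha',x_\alpha\in B$ with $\|y_\alpha'-x_\alpha\|\to 0$, we get $|f(y_\alpha')-f(x_\alpha)|\to 0$, so $f|_Y(y_\alpha')=f(y_\alpha')\to\lambda$, which gives $\lambda\in Cl_{B_Y}(f|_Y,0)$.

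The argument is short, and there is no single deep obstacle; rather, the point is that it uses both hypotheses essentially. Finite codimension of $Y$ is exactly what upgrades the weak convergence $z_\alpha\to 0$ to norm convergence (weak and norm topologies agree on the finite-dimensional $F$), and uniform continuity of $f$ --- not mere continuity --- is what allows the passage from $\|y_\alpha'-x_\alpha\|\to 0$ to $f(y_\alpha')-f(x_\alpha)\to 0$, since the two nets tend to $0$ only weakly. I expect the only mild nuisance to be the bookkeeping in the rescaling step, namely checking that it keeps the points strictly inside the open ball of $Y$ while perturbing them in norm by an amount tending to $0$, so that neither the weak limit nor the value limit is disturbed; this is entirely routine.
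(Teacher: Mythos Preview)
Your proof is correct, and it takes a cleaner route than the paper's. Both arguments begin identically: decompose $x_\alpha=y_\alpha+z_\alpha$ with $y_\alpha\in Y$ and $z_\alpha$ in a fixed finite-dimensional complement, and use that weak and norm convergence coincide on $F$ to obtain $\|z_\alpha\|\to 0$. The difference is in the concluding step. The paper first reduces, via uniform polynomial approximation on $\bar B$, to the case where $f$ is an $m$-homogeneous polynomial, then expands $f(x_\alpha)=F(y_\alpha+u_\alpha,\dots,y_\alpha+u_\alpha)$ multilinearly and observes that every term containing a factor $u_\alpha$ tends to $0$ because $\|u_\alpha\|\to 0$, forcing $f(y_\alpha)\to\lambda$; only then does it rescale. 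You instead rescale first to produce $y_\alpha'\in B_Y$ with $\|y_\alpha'-x_\alpha\|\to 0$ and invoke uniform continuity of $f$ directly to get $f(y_\alpha')\to\lambda$. Your approach is more elementary --- it uses the very definition of $A_u(B)$ rather than the polynomial approximation theorem --- and avoids the multilinear bookkeeping entirely; the paper's approach, on the other hand, would survive in settings where one has polynomial density but not uniform continuity a priori.
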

\begin{proof}

$A_u(B)$ coincides with the uniform limits on $\bar{B}$ of continuous polynomials on $X$ (see Theorem 7.13 in \cite{M} and p.~56 in \cite{ACG}), where polynomials are finite linear combinations of symmetric multi-linear mappings (of possibly distinct degrees) restricted to the diagonal. Thus, by passing to the uniform limit on $\bar{B}$, we may assume $f$ is an $m$-homogeneous polynomial, with associated symmetric $m$-linear functional $F.$ Let $(x_{\alpha})$ be a weakly null net in $B$ such that $f(x_{\alpha}) \to \lambda.$ 

\medskip

Each $x_{\alpha}$ can be written uniquely as $y_{\alpha}+u_{\alpha},$ where $y_{\alpha} \in Y$ and $u_{\alpha}$ is from a fixed finite dimensional complement of $Y$ in $X.$ Then 
\begin{align*} 
&f(x_{\alpha})\\
=&F(x_{\alpha},\cdots, x_{\alpha})\\
=&f(y_{\alpha})+m F(y_{\alpha},\cdots, y_{\alpha}, u_{\alpha})+[m(m-1)/2] F(x_{\alpha},\cdots, x_{\alpha},u_{\alpha},u_{\alpha})+\cdots+f(u_{\alpha}).
\end{align*}

Now, since $(x_{\alpha})$ is weakly null, the same holds for $(y_{\alpha})$ and $(u_{\alpha}).$ However, since $(u_{\alpha})$ belongs to a finite dimensional space, it follows that $|| u_{\alpha} || \to 0.$ Thus $F(y_{\alpha} \cdots, y_{\alpha}, u_{\alpha}),$ $F(y_{\alpha},\cdots, y_{\alpha},u_{\alpha},u_{\alpha}), \cdots, f(u_{\alpha})$ all go to $0.$ Thus $f(y_{\alpha}) \to \lambda.$ Finally, since each $||y_{\alpha}||\leq ||x_{\alpha}||+||-u_{\alpha}||<1+||u_{\alpha}||$, then by defining $t_{\alpha}=\frac{1}{1+||u_{\alpha}||}$ we get that $|| t_{\alpha} y_{\alpha} || < 1$ for all $\alpha$ and $t_{\alpha} \to 1,$
 and consequently, $\lim f(t_{\alpha} y_{\alpha})= \lim t_{\alpha}^m f(y_{\alpha})=\lambda.$ Hence $\lambda \in Cl_{B_Y}(f|_Y,0).$
%Finally, since $\limsup ||y_{\alpha}||$ $\leq 1,$ we can take a sequence of scalars $(t_{\alpha})$ such that $|| t_{\alpha} %y_{\alpha} || < 1$ for all $\alpha$ and $t_{\alpha} \to 1,$
 %and consequently, $\lim f(t_{\alpha} y_{\alpha})= \lim t_{\alpha}^m f(y_{\alpha})=\lambda.$ Hence $\lambda \in %Cl_{B_Y}(f|_Y,0).$

\end{proof}  

As a consequence we obtain that the cluster sets of an element $f$ of $A_u(B)$ at 0 can be described in terms of the Gelfand transforms of $f|_{B_Y}$ as $Y$ ranges over finite-codimensional subspaces of $X$:

\begin{pr}\label{pr2.6}
For every Banach space $X,$ $$Cl_{B}(f,0)=\bigcap_{Y\subset X,\dim(X/Y)<\infty} \widehat{f|_{B_Y}} (M_0(B_Y)) , \; \forall f \in A_u(B).$$
\end{pr}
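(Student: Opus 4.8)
The plan is to establish the two inclusions separately. The inclusion ``$\subset$'' is immediate from what has already been proved: fix a closed subspace $Y\subset X$ with $\dim(X/Y)<\infty$. Then $f|_Y\in A_u(B_Y)$ and $A_u(B_Y)\supset Y^*$, so the basic inclusion~(1), applied on $B_Y$ at the origin of $\bar B_Y^{**}$, yields $Cl_{B_Y}(f|_Y,0)\subset\widehat{f|_{B_Y}}(M_0(B_Y))$. Combining this with Proposition~\ref{pr2.5}, which gives $Cl_B(f,0)=Cl_{B_Y}(f|_Y,0)$, we get $Cl_B(f,0)\subset\widehat{f|_{B_Y}}(M_0(B_Y))$; since $Y$ was arbitrary, $Cl_B(f,0)$ is contained in the intersection.

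For ``$\supset$'' I would argue by contraposition: given $\lambda\notin Cl_B(f,0)$, I will exhibit a single finite-codimensional $Y$ with $\lambda\notin\widehat{f|_{B_Y}}(M_0(B_Y))$. Replacing $f$ by $f-\lambda\in A_u(B)$ — which shifts $Cl_B(f,0)$ and every Gelfand transform $\widehat{f|_{B_Y}}$ by $-\lambda$ — reduces us to the case $\lambda=0$, i.e.\ $0\notin Cl_B(f,0)$. Exactly as in the proof of Proposition~\ref{pr2.2}, there are $\delta>0$, functionals $x_1^*,\dots,x_n^*\in B_{X^*}$ and $\epsilon_0>0$ with $|f|\ge\delta$ on $U\cap B$, where $U=\bigcap_{i=1}^n\{x\in X:|\langle x_i^*,x\rangle|<\epsilon_0\}$. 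Put $Y=\bigcap_{i=1}^n\ker x_i^*$, a closed subspace of codimension at most $n$. Every $y\in B_Y$ satisfies $\langle x_i^*,y\rangle=0$ and $\|y\|<1$, hence $B_Y\subset U\cap B$, so $|f|_Y|\ge\delta$ on $B_Y$.

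Finally I would check that $f|_Y$ is invertible in $A_u(B_Y)$: the function $1/f|_Y$ is analytic on $B_Y$, bounded there by $1/\delta$, and uniformly continuous since $|1/f(x)-1/f(y)|\le\delta^{-2}|f(x)-f(y)|$ on $B_Y$; thus $1/f|_Y\in A_u(B_Y)$. Therefore $\widehat{f|_{B_Y}}$ vanishes nowhere on the spectrum $M_{A_u(B_Y)}$, and in particular $0\notin\widehat{f|_{B_Y}}(M_0(B_Y))$, which completes the contrapositive.

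The step I expect to carry the weight is the second paragraph — specifically, the observation that the weak neighborhood $U$ witnessing $0\notin Cl_B(f,0)$ can be chosen so that its defining functionals are precisely those cutting out a finite-codimensional subspace $Y$ with $B_Y\subset U$. Once that is arranged, the passage to $Y$ itself (rather than to a finite-rank perturbation of the identity on $X$, as in Proposition~\ref{pr2.2}) makes the rest routine, and no approximation or compactness argument is needed.
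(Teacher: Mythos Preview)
Your proof is correct and follows essentially the same route as the paper: both directions match, with ``$\subset$'' coming from Proposition~\ref{pr2.5} together with the basic inclusion~(1), and ``$\supset$'' handled by contraposition via a finite-codimensional subspace $Y$ contained in the weak neighborhood witnessing $0\notin Cl_B(f,0)$. Your version simply makes explicit a few points the paper leaves implicit --- the reduction from $\lambda$ to $0$, the choice $Y=\bigcap_i\ker x_i^*$, and the verification that $1/f|_Y\in A_u(B_Y)$.
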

\begin{proof}
From Proposition \ref{pr2.5} and the inclusion in (1), for every finite-codimensional subspace $Y$ of $X,$
$$Cl_B(f,0)=Cl_{B_Y}(f|_{B_Y},0) \subset \widehat{f|_{B_Y}} (M_0(B_Y)).$$

\medskip

For the reverse inclusion, suppose $0 \notin Cl_B(f,0).$ Then there are $\epsilon >0$ and a weak neighborhood $U$ of $0$ such that $|f|>\epsilon$ on $U \cap B.$ $U$ contains a closed finite-codimensional subspace $Y_0$ of $X,$ so $|f|_{B_{Y_0}}|>\epsilon.$ Hence $\widehat{f|_{B_{Y_0}}}$ is invertible, which implies that $0 \notin \widehat{f|_{B_{Y_0}}} (M_0(B_{Y_0})).$
\end{proof}

Going back to Proposition \ref{pr2.2}, we see that having the cluster value property at 0 only requires the existence of a certain type of finite rank operators at distance one from the identity operator. However simple this condition may seem, it is impossible in the case of the Banach space $c$ of continuous functions on $\omega,$ also seen as the subspace of $l^{\infty}$ of convergent sequences:

\begin{ex}\label{ex2.7}
Let $L \in B_{c^*}$ be given by $$L((c_n)_n)=\lim_{n \to \infty} c_n.$$ If $S: c \to c$ is a finite rank operator with $||(S^*-I_{c^*})L||<\epsilon,$ then $||S-I_c|| \geq 2-\epsilon$.
\end{ex}
\begin{proof}

For each $k \in \mathbb{N},$ consider $L_k \in B_{c^*}$ given by $$L_k((c_n)_n)=(\lim_{n \to \infty}c_n-c_k)/2.$$

\medskip

Let us show that $||S^*(L_k)|| \to 0$ as $k \to \infty.$ For every $x \in B_c,$ $S^*(L_k)x=L_k(Sx) \to 0$ as $k \to \infty.$ Moreover, since $S$ has finite rank, $\{Sx: \; x \in B_c\}$ is pre-compact. Thus $S^*L_k=L_k\circ S$ converges to zero uniformly on $B_c,$ i.e. $||S^*L_k||\to 0$ as $k \to \infty.$

\medskip

Now note that $||L-2L_k||=1$ for each $k$, so
$$||S^*-I_{c^*}||\geq ||(S^*-I_{c^*})(L-2L_k)||\geq||2L_k-2 \cdot S^*(L_k)||-\epsilon \geq 2-\epsilon-2||S^*(L_k)||.$$
Since $S^*(L_k) \to 0,$ then $||S-I_c||=||S^*-I_{c^*}||\geq 2-\epsilon.$ 

%Let us first prove $S(e_n) \to 0$ as $n \to \infty.$ Since $S|_{c_0}:c_0 \to c$ is a finite rank operator, we can find $\overrightarrow{a^1}, \cdots, \overrightarrow{a^m} \in c_0^*=l^1$ and $\overrightarrow{b^1}, \cdots, \overrightarrow{b^m} \in c$ such that $S|_{c_0}=\sum_{j=1}^{m} \overrightarrow{a^j} \otimes \overrightarrow{b^j}.$ Then $S(e_n)=\sum_{j=1}^{m} a^j_n \cdot \overrightarrow{b^j},$ so $||S(e_n)||\leq\sum_{j=1}^{m} |a^j_n| \cdot ||\overrightarrow{b^j}||.$ Thus $||S(e_n)|| \to 0.$

\medskip

%Now note that $||1-2e_n||=1$ for each $n$, so
%$$||S-I||\geq ||S(\overrightarrow{1}-2e_n)-(\overrightarrow{1}-2e_n)||\geq||2e_n-2 \cdot S(e_n)||-\epsilon \geq 2-\epsilon-||S(e_n)||.$$
%Since $S(e_n) \to 0,$ then $||S-I||\geq 2-\epsilon.$ 
\end{proof}

The reader may check that the condition is also impossible for $L_p,$ $1\leq p \neq 2 < \infty.$

\bigskip

However, note that since $c_0$ is one-codimensional in $c,$ Proposition \ref{pr2.5} implies that for all $f \in A_u(B_c),$
$$Cl_{B_c}(f,0)=Cl_{B_{c_0}}(f|_{B_{c_0}},0).$$

\smallskip

Also, Propositions 1.59 and 2.8 of \cite{D} imply that all functions in $A_u(B_{c_0})$ can be uniformly approximated on $B$ by polynomials in the functions in $X^*,$ which in turn implies that each fiber at $x \in \bar{B}^{**}$ consists only of $x,$ so the cluster value theorem for $A_u(B_{c_0})$ holds, and in particular
$$Cl_{B_{c_0}}(f|_{B_{c_0}},0)=\widehat{f|_{B_{c_0}}} (M_0(B_{c_0})), \; \; \forall f \in A_u(B_{c}).$$

Hence we are left to compare $\widehat{f|_{B_{c_0}}} (M_0(B_{c_0}))$ with $\widehat{f} (M_0(B_{c}))$ for $f \in A_u(B_c).$ Note that an inclusion is evident:

\begin{pr}\label{pr2.8}
For a Banach space $X$ and $Y$ a subspace of $X,$ 
$$\widehat{f|_{B_Y}} (M_0(B_{Y})) \subset \widehat{f} (M_0(B)), \; \; \forall f \in A_u(B).$$
\end{pr}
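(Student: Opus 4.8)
The plan is to exploit the one obvious structure available: restriction of functions from $B$ to $B_Y$ induces, by duality, a map between the spectra that carries the fiber over $0$ into the fiber over $0$. Since $Y$ is a subspace of $X$ we have $B_Y = B \cap Y \subset B$, and for $f \in A_u(B)$ the restriction $f|_{B_Y}$ is again bounded, analytic and uniformly continuous on $B_Y$; hence $Rf := f|_{B_Y}$ defines a unital algebra homomorphism $R \colon A_u(B) \to A_u(B_Y)$, which is contractive because $\|f|_{B_Y}\|_{B_Y} \le \|f\|_{B}$.

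Next I would fix $\psi \in M_0(B_Y)$ and set $\phi := \psi \circ R$. As the composition of a character of $A_u(B_Y)$ with a unital algebra homomorphism, $\phi$ is a (nonzero, multiplicative, automatically continuous) character of $A_u(B)$, hence an element of its spectrum. The one point that must be checked is that $\phi$ belongs to the fiber over $0$, i.e.\ that $\pi(\phi) = \phi|_{X^*} = 0$. For $x^* \in X^*$ we have $R(x^*) = x^*|_Y \in Y^*$, and the condition $\psi \in M_0(B_Y)$ says exactly that $\psi$ annihilates $Y^*$; therefore $\phi(x^*) = \psi(x^*|_Y) = 0$, so $\phi \in M_0(B)$.

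Finally, for every $f \in A_u(B)$ one has
\[ \widehat{f}(\phi) = \phi(f) = \psi(Rf) = \widehat{f|_{B_Y}}(\psi), \]
which shows $\widehat{f|_{B_Y}}(\psi) \in \widehat{f}(M_0(B))$; since $\psi \in M_0(B_Y)$ was arbitrary, the asserted inclusion follows. I do not expect a genuine obstacle here — the argument is purely formal. The only step deserving a moment's care is the verification $\phi|_{X^*} = 0$, which rests on unraveling that the fiber of a spectrum over $0$ is precisely the set of characters annihilating the dual space, together with the compatibility $R(x^*) = x^*|_Y$ between restriction of functions and restriction of functionals.
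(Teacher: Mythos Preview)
Your argument is correct and is essentially identical to the paper's: both define the restriction homomorphism $R\colon A_u(B)\to A_u(B_Y)$, pull back a character $\psi\in M_0(B_Y)$ to $\phi=\psi\circ R\in M_0(B)$, and observe $\widehat{f}(\phi)=\widehat{f|_{B_Y}}(\psi)$. Your verification that $\phi|_{X^*}=0$ is in fact more explicit than the paper's, which only alludes to the fact that $R$ maps $A(B)$ into $A(B_Y)$.
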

\begin{proof}
Let $f \in A_u(B)$ and $\tau \in M_0(B_Y).$ Since $\phi_1: A_u(B) \to A_u(B_Y)$ given by $\phi(g)=g|_Y$ for all $g \in A_u(B)$ is a continuous homomorphism that maps $A(B)$ into $A(B_Y),$ the mapping $\tilde{\tau}: A_u(B) \to \mathbb{C}$ given by $\tilde{\tau}(g)=\tau(g|_Y)$ for all $g \in A_u(B)$ is in the fiber $M_0(B).$ Moreover, $$\widehat{f|_Y}(\tau)=\hat{f}(\tilde{\tau}).$$
\end{proof}

The reverse inclusion is unclear. However, the space $c$ also has the property of being isomorphic to $c_0,$ which implies, as we will see, that $c$ has the cluster value property too.

\bigskip

Let $P(X)$ denote the continuous polynomials on $X,$ $P_f(X)$ the polynomials in the functions of $X^*$ (known as finite type polynomials), and $A(B_X)$ the uniform algebra of uniform limits of elements in $P_f(X).$

\begin{lm}\label{lm2.9}
Let $X$ be a Banach space so that $A_u(B_X)=A(B_X).$ If the Banach space $Y$ is isomorphic to $X,$ then also $A_u(B_Y)=A(B_Y)$.
\end{lm}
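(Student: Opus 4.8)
The plan is to reduce the statement to a fact about continuous polynomials, where a linear isomorphism can be transported almost for free. Recall, as noted in the proof of Proposition~\ref{pr2.5}, that for every Banach space $Z$ the algebra $A_u(B_Z)$ is exactly the set of uniform limits on $\bar B_Z$ of continuous polynomials on $Z$; in other words $A_u(B_Z)=\overline{P(Z)}$ and $A(B_Z)=\overline{P_f(Z)}$, the closures being taken in the sup-norm over $\bar B_Z$. Since $P_f(Z)\subset P(Z)\subset A_u(B_Z)$ and $A_u(B_Z)$ is closed, the inclusion $A(B_Z)\subset A_u(B_Z)$ always holds; so for $Z=Y$ it suffices to prove the reverse inclusion, and because $A(B_Y)$ is closed this reduces to showing that every $q\in P(Y)$ lies in $A(B_Y)$, i.e. is a uniform limit on $\bar B_Y$ of finite-type polynomials on $Y$.

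To do this I would fix a linear isomorphism $T\colon Y\to X$ and, after replacing $T$ by $T/||T||$, assume $||T||\le 1$, so that $T(\bar B_Y)\subset\bar B_X$. The point of passing to polynomials is that they are globally defined: composition with $T^{-1}$ carries $P(Y)$ into $P(X)$ (composing a continuous homogeneous polynomial with a bounded linear map preserves homogeneity and continuity), and composition with $T$ carries $P_f(X)$ into $P_f(Y)$ (if $\varphi\in X^*$ then $\varphi\circ T\in Y^*$).

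Then, given $q\in P(Y)$, the function $q\circ T^{-1}$ belongs to $P(X)\subset A_u(B_X)=A(B_X)$, so by hypothesis there are finite-type polynomials $p_n$ on $X$ with $\sup_{\bar B_X}|q\circ T^{-1}-p_n|\to 0$. Each $p_n\circ T$ is a finite-type polynomial on $Y$, and since $T(\bar B_Y)\subset\bar B_X$,
$$\sup_{y\in\bar B_Y}|q(y)-(p_n\circ T)(y)|=\sup_{y\in\bar B_Y}\bigl|(q\circ T^{-1}-p_n)(Ty)\bigr|\le\sup_{\bar B_X}|q\circ T^{-1}-p_n|\longrightarrow 0.$$
Hence $q\in\overline{P_f(Y)}=A(B_Y)$, which gives $P(Y)\subset A(B_Y)$ and therefore $A_u(B_Y)=A(B_Y)$.

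The one place that needs care — and the reason a more direct argument on $A_u$ itself does not obviously work — is that an arbitrary isomorphism sends $B_Y$ to an ellipsoid-type set rather than a ball, so one cannot transport a general element of $A_u(B_Y)$ across $T$; reducing first to polynomials removes the domain issue, and the harmless rescaling $||T||\le 1$ absorbs the fact that $T$ need not be isometric. If one prefers not to rescale $T$, one can instead argue degree by degree, replacing $q$ and the $p_n$ by their $m$-homogeneous parts — the corresponding projection has norm at most $1$ on $A_u(B_X)$ — so that the extra factor $||T||^m$ produced by a non-normalized $T$ causes no trouble.
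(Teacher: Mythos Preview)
Your proof is correct and follows essentially the same route as the paper's: transport a polynomial from $Y$ to $X$ via $T^{-1}$, approximate there by finite-type polynomials using the hypothesis $A_u(B_X)=A(B_X)$, and pull back via $T$. Your preliminary reduction to a single $q\in P(Y)$ and the normalization $\|T\|\le 1$ make the estimate slightly cleaner than in the paper, which instead works with a sequence $P_n\to f$ and carries the factor $\|T\|$ through the bounds.
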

\begin{proof}

Let $T: Y \to X$ be the Banach space isomorphism between $Y$ and $X.$

\smallskip

Let $f \in A_u(B_Y).$ Then there exist a sequence of polynomials $P_n \in \mathcal{P}(Y)$ such that $||P_n-f||_{B_Y} \leq \frac{1}{n}, \; \; \forall n \in \mathbb{N}.$ 

\smallskip

For each $n \in \mathbb{N},$ $P_n \circ T^{-1} \in \mathcal{P}(X),$ so there exists a polynomial $Q_n \in \mathcal{P}_f(X)$ such that $||P_n \circ T^{-1}-Q_n||_{B_X}<\frac{1}{n \cdot ||T||}$, and consequently $||P_n-Q_n \circ T||_{B_Y}<\frac{1}{n},$ where $Q_n \circ T \in \mathcal{P}_f(Y).$ 

\smallskip

Consequently, the sequence of polynomials $Q_n \circ T \in \mathcal{P}_f(Y)$ converges to $f$ uniformly on $B_Y,$ so $f \in A(B_Y).$

\end{proof}

\begin{co}\label{co2.10}
The Banach space $c$ satisfies the cluster value theorem for $A_u(B_c)$ at all points in $\overline{B_c}^{**}.$
\end{co}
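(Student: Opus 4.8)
The plan is to derive the statement from the isomorphism $c\cong c_0$ together with Lemma~\ref{lm2.9} and the $c_0$ discussion preceding it, so that the comparison of $\widehat{f|_{B_{c_0}}}(M_0(B_{c_0}))$ with $\widehat{f}(M_0(B_c))$ is bypassed entirely. First I would recall the classical fact that $c$ and $c_0$ are isomorphic Banach spaces; for instance $T\colon c\to c_0$ defined by $T((c_n)_n)=(\lim_k c_k,\, c_1-\lim_k c_k,\, c_2-\lim_k c_k,\dots)$ is a bounded linear bijection with bounded inverse. Since Propositions~1.59 and 2.8 of \cite{D} give $A_u(B_{c_0})=A(B_{c_0})$, Lemma~\ref{lm2.9} applied with $X=c_0$ and $Y=c$ yields $A_u(B_c)=A(B_c)$; that is, every $f\in A_u(B_c)$ is a uniform limit on $\overline{B_c}$ of polynomials in functionals of $c^*$.

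Next I would observe that $A_u(B_c)=A(B_c)$ forces each fiber $M_{x^{**}}(B_c)$, $x^{**}\in\overline{B_c}^{**}$, to be a singleton. Indeed $P_f(c)$ is the subalgebra of $A_u(B_c)$ generated by $c^*$ and the constants, hence dense in $A_u(B_c)=A(B_c)$, so every $\tau\in M_{A_u(B_c)}$ is determined by its restriction $\tau|_{c^*}=\pi(\tau)$; thus $\pi$ is injective. Since $\pi$ is also surjective (Chapters~2 and 5 of \cite{G}), it is a bijection, and hence $M_{x^{**}}(B_c)=\{\tau_{x^{**}}\}$ for a unique homomorphism $\tau_{x^{**}}$.

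It then remains to prove the nontrivial inclusion of the cluster value theorem, namely $\widehat{f}(\tau_{x^{**}})\in Cl_{B_c}(f,x^{**})$ for every $f\in A_u(B_c)$; combined with the inclusion (1) this gives the desired equality at every $x^{**}$. For a finite type polynomial $f=Q(x_1^*,\dots,x_n^*)$ this is immediate from Goldstine's theorem: choosing a net $(x_\alpha)\subset B_c$ with $x_\alpha\to x^{**}$ weak-star gives $f(x_\alpha)=Q(x_1^*(x_\alpha),\dots,x_n^*(x_\alpha))\to Q(x^{**}(x_1^*),\dots,x^{**}(x_n^*))=\widehat{f}(\tau_{x^{**}})$. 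For general $f$, approximate uniformly on $\overline{B_c}$ by finite type polynomials $f_k$; since the Gelfand transform is contractive, $\widehat{f_k}(\tau_{x^{**}})\to\widehat{f}(\tau_{x^{**}})$, and the usual subnet argument using $\|f-f_k\|_{\overline{B_c}}\to 0$ yields, for each $k$, a point of $Cl_{B_c}(f,x^{**})$ within $2\|f-f_k\|_{\overline{B_c}}$ of $\widehat{f}(\tau_{x^{**}})$. Since $Cl_{B_c}(f,x^{**})$ is closed, it contains $\widehat{f}(\tau_{x^{**}})$.

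I do not expect a real obstacle: the substance is packaged in Lemma~\ref{lm2.9} and in the quoted fact $A_u(B_{c_0})=A(B_{c_0})$. The only mildly delicate step is the last one, transferring membership from $Cl_{B_c}(f_k,x^{**})$ to $Cl_{B_c}(f,x^{**})$, but this is the standard uniform-approximation-plus-closedness argument and presents no difficulty.
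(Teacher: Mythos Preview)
Your proposal is correct and follows exactly the route the paper intends: since $c\cong c_0$ and $A_u(B_{c_0})=A(B_{c_0})$, Lemma~\ref{lm2.9} gives $A_u(B_c)=A(B_c)$, whence each fiber is a singleton and the cluster value theorem follows. The paper states the corollary without proof as an immediate consequence of Lemma~\ref{lm2.9} and the preceding discussion; you have simply written out the details (including the last step, which could be shortened by noting that $Cl_{B_c}(f,x^{**})$ is always nonempty and is contained in the singleton $\widehat{f}(M_{x^{**}}(B_c))$, hence equals it).
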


\section{Cluster value problem in $C(K)\ncong c.$}

Bessaga and Pe\l czy\'nski proved in \cite{BP} that, when $\alpha \geq \omega^{\omega}$ is a countable ordinal, $C(\alpha)$ is not isomorphic to $c=C(\omega).$ Therefore we no longer can use Lemma \ref{lm2.9} to obtain a cluster value theorem on such spaces of continuous functions. 

\medskip

Nevertheless, for $\alpha$ a countable ordinal, the intervals $[1, \alpha]$ are always compact, Hausdorff and dispersed (they contain no perfect non-void subset). The compact, Hausdorff and dispersed sets $K$ satisfy, from the Main Theorem in \cite{PS}, that $X=C(K)$ contains no isomorphic copy of $l_1.$ Moreover, from Theorem 5.4.5 in \cite{AK}, $X=C(K)$ has the Dunford-Pettis property. Therefore, for dispersed $K,$ the continuous polynomials on $X=C(K)$ are weakly (uniformly) continuous on bounded sets by Corollary 2.37 in \cite{D}.

\medskip

Moreover, since $X^*=l_1(K)$ has the approximation property,  Proposition 2.8 in \cite{D} and the conclusion in the former paragraph now yield that all continuous polynomials on $X$ can be uniformly approximated, on bounded sets, by polynomials of finite type. Thus the elements of $A_u(B)$ can be approximated, uniformly on $B,$ by polynomials of finite type. Hence $A_u(B)=A(B),$ so each fiber at $x \in \bar{B}^{**}$ is the singleton $\{x\},$ and then $X$ satisfies the cluster value theorem for the algebra $A_u(B).$ 

\bigskip

We now consider the cluster value problem on $X$ for the algebra of all bounded analytic functions $H^{\infty}(B).$ Following the line of proof of Theorem 5.1 in \cite{ACGLM}, we still get a cluster value theorem:

\begin{thm}\label{thm3.1}
If $X$ is the Banach space $C(K),$ for $K$ compact, Hausdorff and dispersed, then the cluster value theorem holds for $H^{\infty}(B)$ at every $x \in \bar{B}^{**}$.
\end{thm}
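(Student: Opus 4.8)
The plan is to imitate the structure of Theorem 5.1 in \cite{ACGLM}, reducing the cluster value problem for $H^\infty(B)$ to a statement about fibers over the origin and then using the special structure of $C(K)$ for dispersed $K$. First I would note that, since $C(K)\cong C(K)$ trivially but more importantly since the translation/localization machinery of \cite{ACGLM} lets one move the base point, it suffices to treat the fiber $M_0(B)$ over the origin; a translation argument (of the sort advertised in the introduction as a ``translation result of a cluster value problem, from any point in $B_{C(K)}^{**}$ to the origin'') handles general $x\in\bar B^{**}$. So I would first reduce to showing $Cl_B(f,0)=\widehat f(M_0(B))$ for all $f\in H^\infty(B)$.

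For the reduction at the origin, I would exploit the finite-codimensional subspace structure. The key point, as in \cite{ACGLM}, is that $C(K)$ for $K$ dispersed has nicely complemented finite-codimensional subspaces: if $K$ has an isolated point (which it does, being dispersed and non-void), writing $C(K)\cong\mathbb{C}\oplus C(K')$ for a suitable clopen decomposition, or more generally peeling off finitely many isolated points, one gets finite rank projections. I would want a version of Proposition \ref{pr2.1} valid for $H^\infty(B)$ for \emph{these particular} operators $P=I-S$ — Example \ref{ex2.3} shows this fails for general norm-one $P=I-S$ on $\ell_2$, but the obstruction there comes from the square-root singularity being ``aimed'' along the range of $S$; for $C(K)$ with the sup norm and the natural coordinate projections the geometry is different (the relevant operators are contractive in a bi-contractive way), and one should be able to push $f$ through $P$ inside $H^\infty$. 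Concretely: given $f\in H^\infty(B)$ with $0\notin Cl_B(f,0)$, find $\delta>0$ and a weak neighborhood $U$ of $0$ with $|f|\ge\delta$ on $U\cap B$; shrink $U$ to one determined by finitely many coordinate functionals $\delta_{k_1},\dots,\delta_{k_n}$ (point evaluations at isolated points of $K$); let $P$ be the projection killing those coordinates; then $f\circ P$ is bounded below on $B$, hence invertible in $H^\infty(B)$, and one transfers non-vanishing back to $M_0(B)$ via the analogue of Proposition \ref{pr2.1}.

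The main obstacle is precisely establishing that analogue of Proposition \ref{pr2.1} for $H^\infty(B)$ and the coordinate projections on $C(K)$, since the $A_u$ proof used polynomial approximation (Proposition \ref{pr2.5}), which is unavailable for $H^\infty$. Here I would lean on the Dunford--Pettis property and dispersedness: on $C(K)$ with $K$ dispersed, $X^*=\ell_1(K)$, weak-star convergence to $0$ in $\bar B^{**}$ interacts well with the coordinate structure, and one can analyze a homomorphism $\phi\in M_0(B)$ through its behavior on the ``tail'' algebra. The technical heart will be showing $\widehat{f\circ P}(\phi)=\hat f(\phi)$ for $\phi$ in the fiber over $0$: I would approximate $f$ by functions depending on only finitely many coordinates plus a remainder controlled on a neighborhood of $0$, using that for dispersed $K$ the finite ``initial segments'' of $K$ exhaust it and $C(K)=\overline{\bigcup C(K_\alpha)}$ in a suitable sense, then let $\phi$ act and pass to the limit. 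If a clean general argument resists, one can instead argue transfinitely along the Cantor--Bendixson derivatives of $K$, proving the cluster value theorem by induction on the Cantor--Bendixson rank, the successor step being the one-isolated-point peeling and the limit step a density/compactness argument on $M_0(B)$ — this inductive route is likely the safest way to handle all countable (indeed all) dispersed $K$ uniformly, and I expect the limit-ordinal step of that induction to be the genuinely delicate part.
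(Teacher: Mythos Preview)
Your outline has two genuine gaps. First, the reduction to the origin via ``translation'' is not available in the generality you need. The M\"obius-type automorphism $T(f)=(f-f_0)/(1-\overline{f_0}f)$ of Lemma~\ref{lm3.2} requires $f_0\in B$, i.e.\ $\|f_0\|_\infty<1$; it does not make sense for boundary points of $\bar B^{**}=\bar B_{\ell_\infty(K)}$, which is precisely where the cluster value problem has content. Moreover, that translation permutes the fibers $\mathcal M_{f_0}$ defined by restriction to $A_u$, not the standard fibers $M_w=\pi^{-1}(w)$ defined by restriction to $X^*$; since $T$ is nonlinear, $\hat T$ need not carry $M_w$ to $M_0$. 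So ``reduce to $0$ and work there'' is not a legitimate first step for Theorem~\ref{thm3.1}.

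Second, even at the origin your key step --- an $H^\infty$ analogue of Proposition~\ref{pr2.1} for the coordinate projections on $C(K)$ --- is not established, and you acknowledge as much; the proposed substitutes (approximation by finite-coordinate functions, transfinite induction on Cantor--Bendixson rank) are speculative. The paper bypasses both issues entirely: it works \emph{directly} at an arbitrary $w\in\bar B^{**}$, uses $X^*=\ell_1(K)$ to replace the weak-star neighborhood by a set $\{z\in B:|z_{t_j}-w_{t_j}|<\delta,\ 1\le j\le m\}$ determined by finitely many coordinates, and then, following the induction in Theorem~5.1 and Lemma~5.3 of \cite{ACGLM} one coordinate at a time, constructs $g_m,h_{m1},\dots,h_{mm}\in H^\infty(B)$ with
\[
f\,g_m \;=\; 1+\sum_{j=1}^m (z_{t_j}-w_{t_j})\,h_{mj}\quad\text{on }B.
\]
Since each $\widehat{z_{t_j}}-w_{t_j}$ vanishes on $M_w$, this gives $\hat f\,\hat g_m=1$ on $M_w$ directly. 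No reduction to the origin, no $H^\infty$ version of Proposition~\ref{pr2.1}, and no Cantor--Bendixson induction are needed; dispersedness enters only through the identification $C(K)^{**}=\ell_\infty(K)$ that makes the coordinate picture available.
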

\begin{proof}
Fix $f \in H^{\infty}(B)$ and $w=(w_t)_{t \in K} \in \bar{B}^{**}$ (where $C(K)^{**}=l_{\infty}(K)$). Suppose $0 \notin Cl_B(f, w).$ It suffices to show that $0 \notin \hat{f}(M_w).$ 

\medskip

Since 0 is not a cluster value of $f$ at $w,$ there exists a weak-star neighborhood $U$ of $w$ such that $0 \notin \overline{f(U \cap B)},$ where $$U \cap B\supset \cap_{i=1}^{n} \{ z \in B: | < (z-w), x_i^* > | < \epsilon \},$$ for some $\epsilon > 0$ and $x_1^*, \cdots, x_n^* \in X^*=l_1(K).$

\medskip

We have that $x_i^*=(x_i^*(t))_{t \in K}$ has countably many nonzero coordinates $\{x_i^*(t)\}_{t \in F_i}$ for $i=1,\cdots,n$. Then, $$U \cap B\supset \cap_{i=1}^{n} \{ z \in B: |\sum_{t \in K} (z_{t}-w_{t})x_i^*(t)| < \epsilon \},$$

\medskip

and there is a finite set $F\subset \cup_{i=1}^n F_i$ so that $\sum_{t \notin F} |x_i^*(t)|<\epsilon/4,$ for $i=1, \cdots, n.$ Then, $$U \cap B\supset \cap_{t \in F} \{ z \in B: |z_{t}-w_{t}| < \delta \},$$ where $$\delta=\min_{1\leq i \leq n, t \in F } \frac{\epsilon}{(2|F|)|x_i^*(t)|}.$$
  
\medskip

In summary, there exist $c > 0,$ $\delta > 0$ and a finite set $F\subset K$ such that if $z \in B$ satisfies $|z_{t}-w_{t}| < \delta$ for $t \in F$ then $|f(z)| \geq c.$ Relabel the indices in $F$ as $t_1, \cdots, t_m,$ where $m=|F|.$ Then proceed as in the proof of Theorem 5.1 in \cite{ACGLM}:

\medskip

For $0\leq k \leq m-1,$ define $U_k=\{z \in B: |z_{t_j}-w_{t_j}|<\delta, k+1 \leq j \leq m \},$ and set $U_m=B.$ Note that $1/f$ is bounded and analytic on $U_0.$

\medskip

We claim that for each $k,$ $1\leq k\leq m,$ there are functions $g_k$ and $h_{k,j},$ $1\leq j \leq k,$ in $H^{\infty}(U_k)$ that satisfy
\begin{equation}
f(z)g_k(z)=1+(z_{t_1}-w_{t_1})h_{k1}(z)+\cdots+(z_{t_k}-w_{t_k})h_{kk}(z), \; \; z \in U_k.
\end{equation}

\smallskip

Once this claim is established, the proof is easily completed as follows. The functions $g_m$ and $h_{mj}$ belong to $H^{\infty}(B)$ and satisfy
$$\widehat{f} \widehat{g_m}=\widehat{1}+\sum_{j=1}^m \widehat{(z_{t_j}-w_{t_j})} \widehat{h_{mj}}.$$

\smallskip

Since each  $\widehat{z_{t_j}}-w_{t_j}$ vanishes on $M_w$ (by the definition of $M_w$), we obtain $\widehat{f}\widehat{g_m}=1$ on $M_w,$ and consequently $\widehat{f}$ does not vanish on $M_w,$ as required.

\medskip

Just as in \cite{ACGLM}, the claim is established by induction on $k.$ The first step, the construction of $g_1$ and $h_{11},$ is as follows. We regard $1/f((z_t)_{t \in K})$ as a bounded analytic function of $z_{t_1}$ for $|z_{t_1}|<1$ and $|z_{t_1}-w_{t_1}|<\delta,$ with $z_t,$ $t \in K-\{t_1\},$ as analytic parameters in the range $|z_t|<1$ for $t \in K-\{t_1\},$ and $|z_{t_j}-w_{t_j}|<\delta$ for $2 \leq j \leq m.$ According to lemma 5.3 in \cite{ACGLM}, we can express
$$\frac{1}{f(z)}=g_1(z)+(z_{t_1}-w_{t_1})h(z), \; \; z \in U_0,$$
where $g_1 \in H^{\infty}(U_1)$ and $h \in H^{\infty}(U_0)$. We set
$$h_{11}(z)=[f(z)g_1(z)-1]/(z_{t_1}-w_{t_1}), \; \; z \in U_1,$$
so that (2) is valid for $k=1.$ Note that $h_{11}=-hf$ on $U_0.$ Consequently $h_{11}$ is bounded and analytic on $U_0.$ The defining formula then shows that $h_{11}$ is analytic on all of $U_1,$ and since $|z_{t_1}-w_{t_1}|\geq \delta$ on $U_1-U_0,$ $h_{11}$ is bounded on $U_1.$

\medskip

Now suppose that $2\leq k \leq m,$ and that there are functions $g_{k-1}$ and $h_{k-1,j} \; \; (1\leq j \leq k-1)$ that satisfy (2) and are appropriately analytic. We apply lemma 5.3 in \cite{ACGLM} to these as functions of $z_{t_k},$ with the other variables regarded as analytic parameters, to obtain decompositions 
$$g_{k-1}(z)=g_k(z)+(z_{t_k}-w_{t_k})G_k(z)$$
and
$$h_{k-1,j}(z)=h_{k,j}(z)+(z_{t_k}-w_{t_k})H_{k,j}(z), \; \; 1\leq j \leq m-1,$$
where $g_k$ and the $h_{kj}$'s are in $H^{\infty}(U_{k}),$ and $G_k$ and the $H_{kj}$'s are in $H^{\infty} (U_{k-1}).$ From the identity (2), with $k$ replaced with $k-1,$ we obtain 
$$fg_k=1+\sum_{j=1}^{k-1}(z_{t_j}-w_{t_j})h_{kj}+(z_{t_k}-w_{t_k})[-fG_k+\sum_{j=1}^{k-1}(z_{t_j}-w_{t_j})H_{kj}]$$ 
on $U_{k-1}.$ We define
$$h_{kk}=[fg_k-1-\sum_{j=1}^{k-1}(z_{t_j}-w_{t_j})h_{kj}]/(z_{t_k}-w_{t_k}), \; \; z \in U_k.$$
Then (2) is valid. On $U_{k-1}$ we have
$$h_{kk}=-fG_k+\sum_{j=1}^{k-1}(z_{t_j}-w_{t_j})H_{kj},$$
so that $h_{kk}$ is bounded and analytic on $U_{k-1}.$ Since $|z_{t_k}-w_{t_k}|\geq \delta$ on $U_k-U_{k-1},$ we see from the defining formula that $h_{kk} \in H^{\infty}(U_k).$ This establishes the induction step, and the proof is complete.

\end{proof}

We do not know the answer to the cluster value problem for other spaces $C(K).$ 

\bigskip

Consider the following cluster value problem: Given $f_0^{**} \in \overline{B}^{**},$ the cluster value problem for $H^{\infty}(B)$ over $A_u(B)$ at $f_0^{**}$ asks whether for all $\psi \in H^{\infty}(B)$ and $\tau \in \mathcal{M}_{f_0^{**}}(B)$ ($\mathcal{M}_{f_0^{**}}(B)=\pi^{-1}(\delta_{f_0^{**}})$ for the restriction map $\pi: M_{H^{\infty}(B)} \to  M_{A_u(B)}$), can we find a net $(f_{\alpha})\subset B$ such that $\psi(f_{\alpha}) \to \tau(\psi)$ and $f_{\alpha}$ converges to $f_0^{**}$ in the polynomial-star topology, i.e. the smallest topology that makes every extension of a polynomial on $X$ to $X^{**}$ continuous (that we denote by $\tau(\psi) \in \mathsf{Cl}_B(\psi, f_0^{**})$)? As before, clearly $\mathsf{Cl}_B(\psi, f_0^{**})\subset \widehat{\psi}(\mathcal{M}_{f_0^{**}}(B)), \; \forall \psi \in H^{\infty}(B)$.

%Consider the following cluster value problem: Given $f_0^{**} \in \overline{B}^{**},$ the cluster value problem for %$H^{\infty}(B)$ over $A_u(B)$ at $f_0^{**}$ asks whether for all $\psi \in H^{\infty}(B)$ and $\tau \in M_{H^{\infty}(B)}$ %such that $\tau|_{A_u(B)}=f_0^{**}$ (that we denote by $\tau \in \mathcal{M}_{f_0^{**}}(B)$), can we find a net %$(f_{\alpha})\subset B$ such that $\psi(f_{\alpha}) \to \tau(\psi)$ and $f_{\alpha}$ converges to $f_0^{**}$ in the %polynomial-star topology, as defined in p.~200 in \cite{G} (that we denote by $\tau(\psi) \in \mathsf{Cl}_B(\psi, %f_0^{**})$)?

\bigskip

The cluster value problem for $H^{\infty}(B)$ over $A_u(B)$ coincides with the cluster value problem for $H^{\infty}(B)$ when $A_u(B)=A(B)$. Thus when $K$ is compact, Hausdorff and dispersed, we have a positive answer to the previous cluster value problem for such $C(K)$ spaces. 

\bigskip

The previous problem seems to be highly nontrivial. Since for every infinite compact Hausdorff space $K$, $C(K)$ contains a subspace $Y$ isometric to $c_0$ (Proposition 4.3.11 in \cite{AK}), the fiber $\mathcal{M}_0(B_{C(K)})$ is huge (and from Lemma \ref{lm3.3}, also each fiber $\mathcal{M}_{f_0}(B_{C(K)})$ for $f_0 \in B_{C(K)}$). Indeed, according to Theorem 6.6 in \cite{CGJ}, there is a family of distinct characters $\{\tau_{\alpha}\}_{\alpha \in B_{\ell_{\infty}}}$, such that each $\tau_{\alpha}:H^{\infty}(B_Y)\to\mathbb{C}$ satisfies $\delta_0=\tau_{\alpha}|_{A(B_Y)}=\tau_{\alpha}|_{A_u(B_Y)}$ (because $Y$ is isometric to $c_0$, so $A(B_Y)=A_u(B_Y)$). Hence  $\{\tau_{\alpha}\}_{\alpha \in B_{\ell_{\infty}}} \subset \mathcal{M}_0(B_{Y})$ and therefore $\{\tau_{\alpha}\circ R\}_{\alpha \in B_{\ell_{\infty}}} \subset \mathcal{M}_0(B_{C(K)})$, where $R$ is the restriction mapping $R:H^{\infty}(B_{C(K)})\to H^{\infty}(B_Y)$, which is clearly a homomorphism. Note that the characters $\{\tau_{\alpha}\circ R\}_{\alpha \in B_{\ell_{\infty}}}$ are all distinct due to Theorem 1.1 in \cite{AB}, because $\ell_{\infty}$ is an isometrically injective space (Proposition 2.5.2 in \cite{AK}), so there exists a norm-one linear map $S:C(K) \to\ell_{\infty}$ such that $S|_{c_0}=I_{c_0}.$

\bigskip

We prove in Corollary \ref{co3.4} that if the latter cluster value problem has an affirmative answer at some point of $B_{C(K)}$, then it has an affirmative answer at all points of $B_{C(K)}.$ For that let us first establish the following lemmas, the first of which is a folklore result mentioned e.g. in \cite{V} and \cite{BKU}, but since there seems to be no proof in the literature we will sketch the proof.

\begin{lm}\label{lm3.2}
Let $f_0 \in B=B_{C(K)}.$ $T: B \to B$ given by $$T(f)=\frac{f-f_0}{1-\overline{f_0}\cdot f} \; \; \; \forall f \in B,$$ is biholomorphic.
\end{lm}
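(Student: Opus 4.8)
The plan is to verify that the map $T(f) = \frac{f - f_0}{1 - \overline{f_0}\cdot f}$ is a well-defined holomorphic self-map of $B = B_{C(K)}$ whose inverse is the map $T_{-f_0}$ of the same form (with $f_0$ replaced by $-f_0$), so that $T$ is a biholomorphic automorphism. The key point is that everything is computed \emph{pointwise} on $K$: for each $t \in K$, the scalar map $M{\"o}b_{f_0(t)}(\lambda) = \frac{\lambda - f_0(t)}{1 - \overline{f_0(t)}\lambda}$ is the classical disk automorphism of $B_{\mathbb C}$, and $T(f)(t) = M{\"o}b_{f_0(t)}(f(t))$.

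First I would check that $T$ maps $B$ into $B$. Since $\|f_0\|_\infty < 1$ and $\|f\|_\infty \le 1$ (indeed we need $< 1$ for the open ball, but the sup is over $K$), for each $t$ we have $|\overline{f_0(t)} f(t)| \le \|f_0\|_\infty < 1$, so $1 - \overline{f_0(t)}f(t)$ is bounded away from $0$ uniformly in $t$; hence $T(f) \in C(K)$ (quotient of continuous functions with nonvanishing denominator) and $\|T(f)\|_\infty = \sup_t |M{\"o}b_{f_0(t)}(f(t))| \le 1$ by the Schwarz--Pick property of the scalar M{\"o}bius maps, with strict inequality when $\|f\|_\infty<1$. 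Second, I would verify holomorphy of $T$ on $B$: $T(f) = (f - f_0)\cdot(1 - \overline{f_0}f)^{-1}$, where $g \mapsto g^{-1}$ is holomorphic on the open set of invertible elements of the Banach algebra $C(K)$, and $f \mapsto 1 - \overline{f_0}f$ is affine; since the denominator stays invertible on all of $B$ (uniform lower bound above), $T$ is a composition of holomorphic maps, hence holomorphic on $B$. Third, the inverse: a direct pointwise computation shows $M{\"o}b_{-f_0(t)} \circ M{\"o}b_{f_0(t)} = \mathrm{id}$ on $B_{\mathbb C}$ for each $t$, which is the standard fact about disk automorphisms; running the same algebra at the level of $C(K)$ gives $T_{-f_0} \circ T = T \circ T_{-f_0} = \mathrm{id}_B$. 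Since $T_{-f_0}$ is also a holomorphic self-map of $B$ by the same argument, $T$ is biholomorphic.

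The only mild obstacle is bookkeeping: one must be careful that all the denominators involved (both $1 - \overline{f_0}f$ and, in the composition, the denominator produced by applying $T_{-f_0}$ to $T(f)$) are uniformly bounded away from zero on $B$, so that each intermediate function genuinely lies in $C(K)$ and the Banach-algebra inversion is licit; this follows from $\|f_0\|_\infty < 1$ together with the Schwarz--Pick bound $\|T(f)\|_\infty \le 1$, but it is worth stating explicitly. The algebraic identity $M{\"o}b_{-a}\circ M{\"o}b_a = \mathrm{id}$ and the self-map property are entirely classical one-variable facts, applied coordinatewise; since the paper only asks for a sketch, I would cite the scalar M{\"o}bius computation and then note that it transfers verbatim to $C(K)$ because all operations are pointwise.
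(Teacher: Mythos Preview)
Your proposal is correct and follows the same overall route as the paper: both treat $T$ as the pointwise application of the scalar disk automorphism $z\mapsto\frac{z-a}{1-\bar a z}$, check that $T$ maps $B$ into $B$, verify holomorphy, and exhibit the explicit inverse $T_{-f_0}$. The two differences are tactical. For the self-map property the paper uses a compactness argument (the image of $\delta\bar\Delta\times\delta_0\bar\Delta$ under $(z,c)\mapsto\frac{z-c}{1-\bar cz}$ is compact in $\Delta$) to get $\|T(f)\|_\infty<1$; your Schwarz--Pick remark gives only the pointwise strict inequality, but since $K$ is compact the continuous function $t\mapsto|T(f)(t)|$ attains its supremum, which recovers the strict bound---you should say this explicitly. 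For holomorphy the paper computes the Fr\'echet derivative by hand, whereas you invoke holomorphy of inversion in the Banach algebra $C(K)$ and compose; this is cleaner and equally valid.
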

\begin{proof}

Set $\delta_0=||f_0||.$ 

\smallskip

Let us start by showing that $T$ is well defined, i.e. $||Tf||<1$ when $||f||<1.$

\smallskip

Let $f \in B.$ We can find $\delta \in (\delta_0,1)$ such that $||f||\leq \delta.$

\smallskip

For every $t_0 \in K,$ let $z=f(t_0)$ and $c=f_0(t_0),$ so that $T(f)(t_0)=\frac{z-c}{1-\overline{c} z}.$

\smallskip

Let $\Delta$ denote the open unit disk in the complex plane $\mathbb{C}.$

\smallskip

Since $\sigma: (\delta\cdot\overline{\Delta})\times(\delta_0\cdot\overline{\Delta}) \to \Delta$ given by $\sigma(z,c)=\frac{z-c}{1-\overline{c}z}$ is continuous, then $\sigma((\delta\cdot\overline{\Delta})\times(\delta_0\cdot\overline{\Delta}))$ is a compact subset of $\Delta,$ so there exists $\delta_1<1$ so that $\sigma((\delta\cdot\overline{\Delta})\times(\delta_0\cdot\overline{\Delta}))\subset \delta_1\overline{\Delta}.$

\smallskip

Thus $||Tf||\leq \delta_1 <1.$

\bigskip

Let us now show that $T$ is also holomorphic, or equivalently, $\mathbb{C}$-differentiable. For $f \in B$ fixed, the linear mapping $L:C(K) \to C(K)$ given by $L(h)=\frac{1-|f_0|^2}{(1-\overline{f_0}f)^2}h$ satisfies that, for $h\neq 0$ small enough,

\begin{align*}
\frac{T(f+h)-T(f)-L(h)}{||h||}
&=(\frac{f+h-f_0}{1-\overline{f_0}(f+h)}-\frac{f-f_0}{1-\overline{f_0}f}-\frac{1-|f_0|^2}{(1-\overline{f_0}f)^2}h)/||h||\\
&=(\frac{1-|f_0|^2}{1-\overline{f_0}f}\cdot\frac{h}{1-\overline{f_0}(f+h)}-\frac{1-|f_0|^2}{(1-\overline{f_0}f)^2}h)/||h||\\
&=\frac{\overline{f_0}h}{(1-\overline{f_0}f)^2(1-\overline{f_0}(f+h))}(1-|f_0|^2)h/||h||,
\end{align*}

which goes to zero as $h \to 0.$ Thus $T$ is holomorphic.

\medskip

\bigskip

Since $T$ clearly has a necessarily holomorphic inverse ($S(f)=\frac{f+f_0}{1+\overline{f_0}\cdot f}$), we have that $T$ is a biholomorphic function on $B$ that sends $f_0$ to the function identically zero.

\end{proof}

\begin{lm}\label{lm3.3}
The biholomorphic function $T$ from the previous lemma induces a mapping $\hat{T}$ on the spectrum $M_{H(B)}$, where $H$ denotes either the algebra $A_u$ or the algebra $H^{\infty},$ that maps $\mathcal{M}_{f_0}(B)$ onto $\mathcal{M}_0(B).$
\end{lm}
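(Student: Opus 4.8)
The plan is to realize $\hat T$ as the transpose of the composition operator $C_T\colon g\mapsto g\circ T$. First I would sharpen what the proof of Lemma \ref{lm3.2} already produces: with $\delta_0=\|f_0\|$, the denominators $1-\overline{f_0}f$ and $1+\overline{f_0}f$ stay bounded away from $0$ on the ball of radius $r$ for any $r\in(1,1/\delta_0)$ (the case $\delta_0=0$ being trivial since then $T=S=\mathrm{id}$), so $T$ and its holomorphic inverse $S(f)=\frac{f+f_0}{1+\overline{f_0}f}$ are holomorphic and bounded on a ball strictly containing $\overline B$, map $\overline B$ into $\overline B$, and are Lipschitz on $\overline B$ by a Cauchy estimate. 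Consequently, for $H$ equal to $A_u$ or to $H^{\infty}$, the assignment $C_T(g)=g\circ T$ maps $H(B)$ into $H(B)$: analyticity and the bound $\|g\circ T\|_B\le\|g\|_B$ are immediate from $T(B)\subset B$, and uniform continuity of $g\circ T$ on $B$ follows from uniform continuity of $g$ together with the Lipschitz bound on $T$. Thus $C_T$ is a norm-one unital algebra homomorphism; the same holds for $C_S$, and since $S\circ T=T\circ S=\mathrm{id}_B$ we get $C_S\circ C_T=C_T\circ C_S=\mathrm{id}$, so $C_T$ is an isometric algebra automorphism of $H(B)$.

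I would then define $\hat T\colon M_{H(B)}\to M_{H(B)}$ by $\hat T(\tau)=\tau\circ C_T$. A composition of homomorphisms is a homomorphism and $\tau(C_T1)=1$, so $\hat T$ takes values in $M_{H(B)}$; it is continuous for the Gelfand topologies and has the continuous inverse $\tau\mapsto\tau\circ C_S$, hence $\hat T$ is a homeomorphism of $M_{H(B)}$. Moreover $\hat T$ extends $T$, since $\hat T(\delta_f)=\delta_{T(f)}$ for $f\in B$; in particular $\hat T(\delta_{f_0})=\delta_0$.

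It remains to check $\hat T(\mathcal{M}_{f_0}(B))=\mathcal{M}_0(B)$. In both cases $\mathcal{M}_{f_0}(B)=\{\tau\in M_{H(B)}\colon\tau|_{\mathcal A}=\delta_{f_0}|_{\mathcal A}\}$, where $\mathcal A=A_u(B)$ when $H=H^{\infty}$ and $\mathcal A=A(B)$, the closed subalgebra generated by $X^{*}$, when $H=A_u$. The crux is that $C_T$ carries $\mathcal A$ into $\mathcal A$: for $\mathcal A=A_u(B)$ this is the first paragraph, and for $\mathcal A=A(B)$ it suffices, $A(B)$ being a closed subalgebra, to see that $x^{*}\circ T\in A(B)$ for each $x^{*}\in X^{*}$ — expanding the coordinatewise M\"obius factor $\frac{z-c}{1-\overline c z}=-c+(1-|c|^{2})\sum_{k\ge1}\overline c^{\,k-1}z^{k}$, which converges uniformly for $|z|\le1$ and $|c|\le\delta_0$, exhibits $x^{*}\circ T$ as a series, uniformly convergent on $\overline B$, of homogeneous polynomials of the form $f\mapsto\int f^{k}\,d\nu_{k}$, each of which lies in $A(B)$. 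Granting this, for $\tau\in\mathcal{M}_{f_0}(B)$ and $g\in\mathcal A$ we have $g\circ T\in\mathcal A$ and therefore $\hat T(\tau)(g)=\tau(g\circ T)=(g\circ T)(f_0)=g(T(f_0))=g(0)$, so $\hat T(\tau)\in\mathcal{M}_0(B)$; the symmetric computation with $S$ (using $S(0)=f_0$) shows $\hat T^{-1}$ maps $\mathcal{M}_0(B)$ into $\mathcal{M}_{f_0}(B)$, which gives surjectivity.

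The hard part will be the step $C_T(A(B))\subseteq A(B)$ in the case $H=A_u$: one has to be certain that the homogeneous terms produced by the M\"obius expansion really are uniform limits on $\overline B$ of polynomials of finite type, which rests on $X^{*}=C(K)^{*}$ having the approximation property together with the fact that integral (equivalently nuclear) polynomials are weakly uniformly continuous on bounded sets, so that Proposition 2.8 in \cite{D} applies. Everything else — the extension of $T$ past $\overline B$, the norm bound on $C_T$, and the duality bookkeeping — is routine.
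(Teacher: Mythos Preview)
Your argument is correct and follows the same route as the paper: define $\hat T(\tau)=\tau\circ C_T$, check that $T$ is Lipschitz so $C_T$ preserves both $H^\infty(B)$ and $A_u(B)$, and get surjectivity from the inverse $S$. The paper obtains the Lipschitz bound by the one-line computation
\[
\|T(f)-T(g)\|=\Bigl\|\frac{(1-|f_0|^2)(f-g)}{(1-\overline{f_0}f)(1-\overline{f_0}g)}\Bigr\|\le (1-\|f_0\|)^{-2}\|f-g\|
\]
rather than via a Cauchy estimate on an enlarged ball, but that difference is cosmetic.

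Where you diverge is in the case $H=A_u$: you read the fiber as $\{\tau:\tau|_{A(B)}=\delta_{f_0}|_{A(B)}\}$ and then labor to show $C_T(A(B))\subset A(B)$ via the M\"obius expansion and the approximation property of $C(K)^*$. In the paper the symbol $\mathcal M_{f_0}(B)$ is defined (just before Lemma~\ref{lm3.2}) as $\{\tau\in M_{H(B)}:\tau|_{A_u(B)}=\delta_{f_0}\}$ in \emph{both} cases, so only the invariance $C_T(A_u(B))\subset A_u(B)$ is needed, and the paper checks nothing more. Your ``hard part'' is therefore not required for the lemma as stated; it does establish the stronger (and independently interesting) fact that $\hat T$ also respects the $X^*$-fibers $M_{f_0}(B)$ of $M_{A_u(B)}$, but that goes beyond what is asked.
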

\begin{proof}

Note that $T$ is a Lipschitz function. Indeed, if $f, g \in B,$
$$||T(f)-T(g)||=||\frac{(1-|f_0|^2)(f-g)}{(1-\overline{f_0}f)(1-\overline{f_0}g)}|| \leq \frac{1}{(1-||f_0||)^2} ||f-g||.$$
Thus for every $\psi \in H(B),$ $\psi \circ T \in H(B)$. So  $\hat{T}: M_{H(B)} \to M_{H(B)},$ given by
$$\hat{T}(\tau)(\psi)=\tau(\psi \circ T), \; \; \forall \tau \in M_{H(B)}, \; \psi \in H(B), $$
is well defined. Moreover, given $\tau \in \mathcal{M}_{f_0}(B)$ and $\psi \in A_u(B),$
$$\hat{T}(\tau)(\psi)=\tau(\psi \circ T)=(\psi \circ T)(f_0)=\psi(0),$$
i.e. $\hat{T}(\tau) \in \mathcal{M}_0(B),$ for every $\tau \in \mathcal{M}_{f_0}(B).$

\smallskip

Now, given $\tau \in \mathcal{M}_0(B)$ it is clear that $\hat{\tau}: H(B) \to \mathbb{C}$ given by 
$$\hat{\tau}(\psi)=\tau(\psi \circ T^{-1}), \; \; \forall \psi \in H(B),$$
is in $M_{H(B)},$ actually in $\mathcal{M}_{f_0}(B),$ and $ \forall \; \psi \in H(B),$
$$\hat{T}(\hat{\tau})(\psi)=\hat{\tau}(\psi \circ T)=\tau(\psi),$$
i.e. $\hat{T}(\hat{\tau})=\tau.$
\end{proof}

The reader can easily check that the previous mapping $\hat{T}$ is actually a homeomorphism.

\begin{co}\label{co3.4}
For $X=C(K)$, the cluster value theorem of $H^{\infty}(B)$ over $A_u(B)$ at $0$ is equivalent to the cluster value theorem of $H^{\infty}(B)$ over $A_u(B)$ at every $f_0 \in B$.
\end{co}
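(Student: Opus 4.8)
The plan is to deduce Corollary \ref{co3.4} directly from Lemma \ref{lm3.3} by transporting both the characters and the approximating nets through the biholomorphic map $T$ (and its inverse $S$). First I would unwind the definition of the cluster value theorem of $H^\infty(B)$ over $A_u(B)$ at a point: it asserts that for every $\psi\in H^\infty(B)$ and every $\tau\in\mathcal{M}_{f_0}(B)$, the value $\tau(\psi)$ lies in $\mathsf{Cl}_B(\psi,f_0)$, i.e. there is a net $(f_\alpha)\subset B$ with $f_\alpha\to f_0$ in the polynomial-star topology and $\psi(f_\alpha)\to\tau(\psi)$. So I must show: the property holds at $0$ if and only if it holds at each $f_0\in B$.

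For the substantive direction, assume the cluster value theorem holds at $0$ and fix $f_0\in B$, $\psi\in H^\infty(B)$, and $\tau\in\mathcal{M}_{f_0}(B)$. By Lemma \ref{lm3.3}, $\hat T(\tau)\in\mathcal{M}_0(B)$, where $\hat T(\tau)(\varphi)=\tau(\varphi\circ T)$. Since $\psi\circ S\in H^\infty(B)$ (as $S$ is biholomorphic on $B$, being the inverse of the Lipschitz map $T$ from Lemma \ref{lm3.2}), the hypothesis at $0$ applied to the function $\psi\circ S$ and the character $\hat T(\tau)$ gives a net $(g_\alpha)\subset B$ with $g_\alpha\to 0$ in the polynomial-star topology and $(\psi\circ S)(g_\alpha)\to\hat T(\tau)(\psi\circ S)=\tau(\psi\circ S\circ T)=\tau(\psi)$. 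Now set $f_\alpha=S(g_\alpha)\in B$. Then $\psi(f_\alpha)=(\psi\circ S)(g_\alpha)\to\tau(\psi)$ as desired, and it remains to check $f_\alpha\to f_0$ in the polynomial-star topology. The polynomial-star topology is the weakest making all $P\in P(X)$ continuous (equivalently, by the density argument used earlier in the paper, all finite type polynomials, hence all of $X^*$ together with products thereof); since $S$ is holomorphic with bounded derivatives on $B$ it maps polynomial-star convergent nets to polynomial-star convergent nets — for each $P\in P(X)$, $P\circ S\in P(X)$, so $P(f_\alpha)=(P\circ S)(g_\alpha)\to(P\circ S)(0)=P(f_0)$. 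Hence $f_\alpha\to f_0$ in the polynomial-star topology, completing this direction. The converse is identical with the roles of $T$ and $S$ interchanged, $f_0$ replaced by $0$, using that $\hat T$ maps $\mathcal{M}_{f_0}(B)$ onto $\mathcal{M}_0(B)$ (Lemma \ref{lm3.3}) so that every $\tau'\in\mathcal{M}_0(B)$ is $\hat T(\tau)$ for some $\tau\in\mathcal{M}_{f_0}(B)$.

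The step I expect to require the most care is the verification that $S$ (equivalently $T$) carries polynomial-star convergent nets to polynomial-star convergent nets, and more precisely the identity $S((\psi\circ S)^{-1}\ \text{data})$ bookkeeping — one must be careful that $P\circ S$ genuinely lies in $P(X)$ (it does: composing a continuous polynomial with a bounded holomorphic map of bounded sets to bounded sets, one gets a function analytic and bounded on $B$, hence in $A_u$-type classes; but to stay in $P(X)$ one should instead argue via the generators, noting $x^*\circ S\in A_u(B)$ and that polynomial-star convergence is tested against $P_f(X)$, or simply against all bounded analytic functions with the weak-star topology on the relevant ball). A clean route is to observe that $S$ and $T$ are mutually inverse holomorphic self-maps of $B$ fixing the structure, so $\varphi\mapsto\varphi\circ S$ is a bijective algebra homomorphism of $H^\infty(B)$ (and of $A_u(B)$), whence the induced $\hat S=\hat T^{-1}$ on spectra is a homeomorphism — as already remarked after Lemma \ref{lm3.3} — and the polynomial-star topology on $B$ is precisely the topology inherited from $M_{A_u(B)}$, so a net in $B$ converges polynomial-star to a point of $B$ iff its image under $S$ does. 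Granting this, the corollary is immediate from the two symmetric arguments above; I would present it in that order, stating the equivalence, proving ``$0\Rightarrow f_0$'' in full, and noting the converse follows by symmetry via $S=T^{-1}$.
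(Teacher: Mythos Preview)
Your overall strategy---transporting characters and approximating nets through $T$ and $S=T^{-1}$---is exactly the paper's. The one genuine issue is the polynomial-star continuity of $S$, which you correctly flag as the delicate step. Your first attempt, asserting that $P\circ S\in P(X)$ for every continuous polynomial $P$, is simply false: $S$ is not a polynomial map, so even $x^*\circ S$ is in general not a polynomial on $C(K)$. You seem to sense this and retreat to the ``clean route'' via $M_{A_u(B)}$; that argument is correct. On $B$ the polynomial-star topology agrees with the relative Gelfand topology of $M_{A_u(B)}$ (polynomials are uniformly dense in $A_u(B)$ and the nets stay in $B$), and by Lemma~\ref{lm3.3} applied with $H=A_u$ the map $\hat T$ is a homeomorphism of $M_{A_u(B)}$ restricting to $T$ on point evaluations; hence $S=T^{-1}$ carries polynomial-star convergent nets in $B$ to polynomial-star convergent nets. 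So your proof goes through once the false line $P\circ S\in P(X)$ is deleted and the spectral argument is stated cleanly rather than buried in a parenthetical.

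The paper establishes the same polynomial-star continuity by a different, more concrete device: it expands
\[
T^{-1}(f)=(f+f_0)\sum_{n=0}^{\infty}(-\overline{f_0}\,f)^n
\]
as a norm-convergent series on $B$ and observes that each partial sum is polynomial-star continuous, being built from the algebra product in $C(K)$ and multiplication by the fixed elements $f_0,\overline{f_0}$; the uniform limit then inherits polynomial-star continuity. This is specific to the $C(K)$ setting (it uses the pointwise product) but is entirely elementary and avoids invoking the spectrum. Your spectral route is softer and arguably more portable, but you should make explicit the two ingredients it rests on: Lemma~\ref{lm3.3} for $H=A_u$, and the identification of the polynomial-star and Gelfand topologies on $B$.
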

\begin{proof}
Let $f_0 \in B$ and set $T$ as in Lemma \ref{lm3.2}. Then, $\forall \; \psi \in H^{\infty}(B),$
\begin{align*}
\hat{\psi}(\mathcal{M}_{0}(B))&=\hat{\psi} \circ \hat {T} (\mathcal{M}_{f_0}(B))=\widehat{\psi \circ T} (\mathcal{M}_{f_0}(B)),\\
\mathsf{Cl}_{B}(\psi, 0)&=\mathsf{Cl}_{B}(\psi \circ T, f_0),
\end{align*}
because $\psi \circ T \in H^{\infty}(B)$ too, and $T^{-1}(f)=(f+f_0)\sum_{n=0}^{\infty}(-\overline{f_0}f)^n \; \; \forall f \in B_{C(K)}$ is polynomially-star continuous, because sums and norm limits of polynomially-star continuous maps are polynomially-star continuous, as well as multiplication by a fixed element of $C(K)$. 
\end{proof}

%Let us note that, from the Gelfand Representation Theorem, the previous result holds for any nonzero unital commutative $C^*$-algebra.  

We now argue that we can extend the previous conclusions to the open unit ball of the second dual of $C(K):$ 

\medskip

In the statement of Lemma \ref{lm3.2}, we can rewrite $\frac{f-f_0}{1-\overline{f_0}\cdot f}$ as $(f-f_0)\sum_{n=0}^{\infty} (\overline{f_0}f)^n.$ Since it is known that $C(K)^{**}$ is a commutative $C^*-$algebra that extends the $C^*$ structure of $C(K)$ (see pp.~310-311 in \cite{Du} and p.~43 in \cite{S}), then Lemma \ref{lm3.2} extends in the following manner.

\begin{lm}\label{lm3.5}
Given $f_0^{**} \in B_{C(K)^{**}},$ let $T_{f_0^{**}}: B_{C(K)^{**}} \to B_{C(K)^{**}}$ be given by $$T_{f_0^{**}}(f^{**})=(f^{**}-f_0^{**})\sum_{n=0}^{\infty} (\overline{f_0^{**}}f^{**})^n \; \; \; \forall f^{**} \in B_{C(K)^{**}}.$$ Then $T_{f_0^{**}}$ is biholomorphic.
\end{lm}

Similarly, we clearly obtain the following analogues of Lemma \ref{lm3.3} and Corollary \ref{co3.4}.

\begin{lm}\label{lm3.6}
For each $f_0^{**} \in B_{C(K)^{**}},$ the biholomorphic function $T_{f_0^{**}}$ from the previous lemma induces a mapping $\widehat{T_{f_0^{**}}}$ on the spectrum $M_{H(B^{**})}$, where $H$ denotes either the algebra $A_u$ or the algebra $H^{\infty},$ that maps $\mathcal{M}_{f_0^{**}}(B^{**})$ onto $\mathcal{M}_0(B^{**})$.
\end{lm}

\begin{co}\label{co3.7}
For $X=C(K)$, the cluster value theorem of $H^{\infty}(B^{**})$ over $A_u(B^{**})$ at $0$ is equivalent to the cluster value theorem of $H^{\infty}(B^{**})$ over $A_u(B^{**})$ at every $f_0^{**} \in B_{C(K)^{**}}$.
\end{co}

Note that this last result is actually a consequence of Corollary \ref{co3.4} since the double dual of a space of continuous functions is again a space of continuous functions.

\bigskip

\begin{center}
\textbf{Acknowledgement}
\end{center}

\bigskip

The authors thank Richard Aron and Manuel Maestre for their communications.

\end{document}